\newtheorem{theorem}{Theorem}[section]
\newtheorem{lemma}[theorem]{Lemma}
\newtheorem{proposition}[theorem]{Proposition}
\def\GL{\mathrm{GL}}
\def\Mat{\mathrm{Mat}}
\def\Aut{\mathrm{Aut}}
\def\Ad{\mathrm{Ad}}
\def\End{\mathrm{End}}
\def\Ker{\mathrm{Ker}}
\def\C{\mathbf{C}}
\def\R{\mathbf{R}}
\def\Z{\mathbf{Z}}
\def\Q{\mathbf{Q}}
\def\C{\mathbf{C}}
\def\N{\mathbf{N}}
\def\kk{\mathbf{k}}
\def\onto{\twoheadrightarrow}
\def\la{\lambda}
\def\into{\hookrightarrow}
\def\onto{\twoheadrightarrow}
\begin{document}
\centerline{}

\title{Residual nilpotence for generalizations of pure braid groups}
\author[I.~Marin]{Ivan Marin}
\address{IMJ, Universit\'e Paris VII, 175 rue du Chevaleret, 75013 Paris, France}
\email{marin@math.jussieu.fr}
\date{June 14,2011}

\subjclass[2010]{Primary 20F36; Secondary 20F55}
\medskip

\begin{abstract}
It is known that the pure braid groups are residually torsion-free nilpotent. This property
is however widely open for the most obvious generalizations of these groups, like pure
Artin groups and like fundamental groups of hyperplane complements (even reflection ones).
In this paper we relate this problem to the faithfulness of linear representations,
and prove the residual torsion-free nilpotence for a few other groups.
\end{abstract}

\maketitle

\section{Introduction}

It has been known for a long time (see \cite{FALK,FALKDEUX}) that the pure braid groups are residually nilpotent, meaning that
they have `enough' nilpotent quotients to distinguish their elements, or equivalently that the intersection of their
descending central series is trivial. Recall that a group $G$ is called residually $\mathcal{F}$ for $\mathcal{F}$ a class
of groups if for all $g \in G \setminus \{ 1 \}$ there exists $\pi : G \onto Q$ with $Q \in \mathcal{F}$ such that $\pi(g) \neq 1$. It is also known that they have the far stronger property of
being residually torsion-free nilpotent.  The strongness of this latter assumption is illustrated by the following implications
(where `residually $p$' corresponds to the class of $p$-groups).
$$
\mbox{residually free} \Rightarrow 
\mbox{residually torsion-free nilpotent}  \Rightarrow 
\mbox{residually $p$ for all $p$} $$
{}$$\Rightarrow 
\mbox{residually $p$ for some $p$} \Rightarrow 
\mbox{residually nilpotent} \Rightarrow 
\mbox{residually finite}
$$
Pure braid groups are not residually free. The following proof of this fact
has been communicated to me several years ago by Luis Paris (note however that the pure braid group
on 3 strands $P_3 \simeq F_2 \times \Z$ is residually free;
it has been announced this year that  $P_4$ is not residually free, see \cite{CFR}).

\begin{proposition} The pure braid group $P_n$ is not residually free for $n \geq 5$.
\end{proposition}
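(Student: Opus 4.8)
The plan is to reduce the statement to a general property of residually free groups and then to exploit the structure of $P_5$ to violate it.

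\medskip

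\noindent\emph{The reduction.} I would first record the following fact: \emph{in a residually free group every $2$-generated subgroup is free or abelian}. To prove it, let $G$ be residually free and let $H=\langle a,b\rangle\le G$ be non-abelian, so that $[a,b]\neq 1$; choose $\varphi\colon G\twoheadrightarrow L$ onto a free group $L$ with $\varphi([a,b])\neq 1$. Then $\varphi(H)=\langle\varphi(a),\varphi(b)\rangle$ is a $2$-generated non-abelian subgroup of a free group, hence free of rank $2$, and $\varphi|_H\colon H\twoheadrightarrow F_2$ is a surjection from a $2$-generated group onto $F_2$. Composing with a surjection $p\colon F_2\twoheadrightarrow H$ gives a surjective endomorphism of $F_2$, which is an isomorphism since $F_2$ is Hopfian; hence $p$ is injective, so $H\cong F_2$ is free. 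Consequently, to prove the Proposition it is enough to exhibit two pure braids $a,b\in P_5$ such that $\langle a,b\rangle$ is non-abelian but \emph{not} free; since $P_5\hookrightarrow P_n$ for $n\ge 5$ and subgroups of residually free groups are residually free, this settles all $n\ge 5$ at once.

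\medskip

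\noindent\emph{The construction.} A natural framework is the Birman fibration $P_5=F_4\rtimes P_4$, with $F_4=\langle A_{15},A_{25},A_{35},A_{45}\rangle$ free of rank $4$ and $P_4$ acting by the restriction of the Artin action; one seeks $a,b\in P_5$ for which $[a,b]\neq 1$ in $P_5$ but $[a,b]$ lies in the kernel of \emph{every} homomorphism from $P_5$ to a free group. By the reduction above, any such pair does the job: $\langle a,b\rangle$ is then non-abelian and, for every $\varphi\colon P_5\to L$ with $L$ free, $\varphi(\langle a,b\rangle)$ is abelian, so $\langle a,b\rangle\not\cong F_2$ and hence $\langle a,b\rangle$ is not free. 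Concretely one would choose $a,b$ (each genuinely involving all five strands, e.g. built from overlapping ``cabled'' or full–twist pure braids) using the commutation, conjugation and central relations of $P_5$ to force them to satisfy a non-trivial identity that already fails in $F_2$, while checking non-triviality of $[a,b]$ by a single explicit quotient map, for instance a projection $P_5\to P_4$ (or $P_5\to P_3$) on which $[a,b]$ survives.

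\medskip

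\noindent The main obstacle is precisely finding the right $a,b$ and verifying that $\langle a,b\rangle$ is not free: one needs $[a,b]$ to be simultaneously \emph{non-trivial in $P_5$} and \emph{trivial in all free quotients of $P_5$}. Since $P_5$ has many free quotients (through the maps $P_5\to P_3\to F_2$ obtained by forgetting pairs of strands, and their composites), a generic $2$-generated subgroup of $P_5$ is in fact free, so the pair must be chosen with care; it is exactly this double requirement that cannot be realized with four strands, which is why the non-residual-freeness of $P_4$ is not covered here and requires the separate and more delicate argument of \cite{CFR}.
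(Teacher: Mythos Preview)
Your reduction lemma is correct and its proof is clean: in a residually free group every $2$-generated subgroup is indeed free or abelian, by exactly the Hopfian argument you give. The difficulty is that everything after the lemma is a programme, not a proof. You never produce elements $a,b\in P_5$ with $\langle a,b\rangle$ non-abelian and non-free; the paragraph headed ``The construction'' only says what properties such a pair \emph{should} have, and you yourself identify exhibiting it as ``the main obstacle''. As written, the argument does not establish the proposition.

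There is also a more serious concern with the strategy itself. Your lemma is a one-way implication: residual freeness forces $2$-generated subgroups to be free or abelian, but the converse is false. So even knowing (which is what we are trying to prove) that $P_5$ is not residually free, it does \emph{not} follow that a $2$-generated non-abelian non-free subgroup exists. For instance, the subgroup $\Z*(F_2\times\Z)$ which the paper exhibits inside $P_5$ is not residually free, yet natural $2$-generated candidates in it keep turning out to be free via a projection to a free quotient and the same Hopfian trick you used. (Note also that obvious sources of non-free $2$-generated groups are ruled out: $P_5$ is residually torsion-free nilpotent, hence bi-orderable, so it contains no Klein-bottle subgroup; and every $2$-generated subgroup of $F_2\times\Z$ is already free or abelian, by your own argument.) Thus the strategy may not be completable at all, and in any case you have not completed it.

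By contrast, the paper's proof avoids this issue entirely. It works inside the right-angled Artin subgroup $\langle \sigma_1^2,\sigma_2^2,\sigma_3^2,\sigma_4^2\rangle\subset P_5$, takes $H_0=\langle \sigma_1^2,\sigma_2^2,\sigma_4^2\rangle\cong F_2\times\Z$ (residually free but not \emph{fully} residually free), and shows using RAAG normal forms that $x=\sigma_3^2\sigma_2^2\sigma_3^{-2}$ together with $H_0$ generates a free product $\Z*H_0$. Then B.~Baumslag's theorem, that a nontrivial free product is residually free only if both factors are fully residually free, finishes the argument. No $2$-generated witness is needed.
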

\begin{proof}
It is sufficient to show that $P_5$ is not residually free.
Letting $\sigma_1,\dots,\sigma_{4}$ denote the Artin generators of the braid group $B_5$,
$P_5$ contains the subgroup $H$ generated by $a=\sigma_1^2$, $b=\sigma_2^2$, $c=\sigma_3^2$, $d=\sigma_4^2$.
As shown in \cite{DLS} (see also \cite{COLLINS}) this group is a right-angled Artin group, which contains a subgroup $H_0 = <a,b,d>$ isomorphic to $F_2 \times \Z$,
which is well-known to be residually free but not fully residually free (see \cite{BAUMSLAG}). 

If we can exhibit $x \in P_5$
such that the subgroup generated by $x$ and $H_0$ is a free product $\Z * H_0$,
then, by a result of \cite{BAUMSLAG} which states that the free product of two non-trivial groups can be residually free only if
the two of them are fully residually free, this proves that $P_5$ is not residually free.

One can take $x = c b c^{-1}$. 
Indeed, if $<x , H_0>$ were not a free product, then it would exist a word with trivial image of the form $c b^{u_1} c^{-1} y_1 c b^{u_2} c^{-1} y_2 \dots c b^{u_r}c^{-1} y_r$
with $y_i \in < a,b,d >$, $y_i \neq 1$ for $i < r$, $u_i \neq 0$ for $i \leq r$, and $r \geq 1$.
But in a right-angled Artin group generated by a set $X$ of letters, an expression can be reduced if and only if it contains a word
of the form $x \dots x^{-1}$ or $x^{-1} \dots x$ with $x \in X$, such that all the letters in $\dots$ commute with $x$ (see e.g. \cite{SERVATIUS}).
From this it is straightforward to check that the former expression cannot be reduced, and this proves the claim.
\end{proof}

The original approach for proving this property of residual torsion-free nilpotence seems to fail for most of the usual generalizations of
pure braid groups. Another approach has been used in \cite{RESNIL,KRAMINF}, using faithful linear representations,
thus relating the linearity problem with this one. The main lemma is the following one.

\begin{lemma} \label{lemresnil}
Let $N \geq 1$, $\kk$ a field of characteristic $0$ and $A = \kk[[h]]$ the ring of formal power series.
Then the group $\GL_N^0(A) = \{ X \in \GL_N(A) \ | \ X \equiv 1 \mod h \} = 1 + h \Mat_N(A)$ is
residually torsion-free nilpotent.
\end{lemma}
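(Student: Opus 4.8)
The idea is to exhibit an explicit descending central series on $G = \GL_N^0(A) = 1 + h\Mat_N(A)$ whose successive quotients are torsion-free, and whose intersection is trivial. Let me set up the natural filtration: for $k \geq 1$, put
$$G_k = 1 + h^k \Mat_N(A) = \{ X \in \GL_N(A) \mid X \equiv 1 \bmod h^k \}.$$
Then $G_1 = G$, and $\bigcap_k G_k = \{1\}$ because $A = \kk[[h]]$ is $h$-adically separated. So if I can show each $G/G_k$ is torsion-free nilpotent, the lemma follows: any $g \neq 1$ lies outside some $G_k$, hence survives in the torsion-free nilpotent quotient $G/G_k$.

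Let me sketch the two claims. For nilpotence of $G/G_k$: a direct matrix computation shows that if $X = 1 + h^i M$ and $Y = 1 + h^j P$ with $M, P \in \Mat_N(A)$, then the commutator $[X,Y] = XYX^{-1}Y^{-1}$ satisfies $[X,Y] \equiv 1 + h^{i+j}(MP - PM) \bmod h^{i+j+1}$; more precisely $[X,Y] \in G_{i+j}$. Hence $[G_i, G_j] \subseteq G_{i+j}$, so the lower central series of $G = G_1$ satisfies $\gamma_m(G) \subseteq G_m$, and in particular $\gamma_k(G) \subseteq G_k$, which gives that $G/G_k$ is nilpotent of class $< k$. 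For torsion-freeness: the successive quotient $G_i/G_{i+1}$ is isomorphic, via $1 + h^i M \mapsto \bar M$, to the additive group $\Mat_N(\kk)$ (one checks $(1+h^iM)(1+h^iP) \equiv 1 + h^i(M+P) \bmod h^{i+1}$), which is a torsion-free abelian group since $\mathrm{char}\,\kk = 0$. A group which is built as a finite iterated extension of torsion-free abelian groups, and which is moreover nilpotent, is itself torsion-free — one can argue by induction on the length $k$, using that in a nilpotent group $H$ with a central torsion-free subgroup $Z$ such that $H/Z$ is torsion-free, $H$ is torsion-free (if $x^n = 1$ then $\bar x^n = 1$ in $H/Z$ forces $x \in Z$, then $x = 1$). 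Care is needed because $G_{i+1}$ need not be central in $G_i$, but it \emph{is} true that $G_k/G_{k+1}$ is central in $G/G_{k+1}$ (again by $[G_1, G_k] \subseteq G_{k+1}$), which is exactly what the inductive step requires; so the induction is on $G/G_k$ with central subgroup $G_{k-1}/G_k$ and quotient $G/G_{k-1}$.

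The one genuinely delicate point — and the place where $\mathrm{char}\,\kk = 0$ is doing work beyond torsion-freeness of the abelian pieces — is making sure the quotients $G_i/G_{i+1}$ are correctly identified and that the commutator estimate $[G_i,G_j]\subseteq G_{i+j}$ holds on the nose (including handling the inverses $X^{-1} = 1 - h^i M + h^{2i}M^2 - \cdots$, which converges $h$-adically in $A$). These are routine power-series manipulations, but they are the computational heart of the argument; everything else is the standard fact that a finitely-iterated central extension of torsion-free abelian groups inside a nilpotent group is torsion-free. I expect no real obstacle, only bookkeeping; the structure above is exactly the proof of the classical fact that the congruence subgroup $1 + h\Mat_N(\kk[[h]])$ is a (pro-)torsion-free-nilpotent group.
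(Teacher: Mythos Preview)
Your proof is correct and follows essentially the same approach as the paper: the same $h$-adic filtration $G_k = 1 + h^k\Mat_N(A)$, the same commutator estimate $[G_i,G_j]\subseteq G_{i+j}$ for nilpotence of $G/G_k$, and the same use of $\bigcap_k G_k = \{1\}$. The only cosmetic difference is that the paper handles torsion-freeness in one line via the leading-term formula $(1+hx)^n \equiv 1 + nhx \bmod h^2$ (implicitly iterated), whereas you spell out the equivalent argument through the successive quotients $G_i/G_{i+1}\cong \Mat_N(\kk)$ and an induction on central extensions.
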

\begin{proof}
Let $G = \GL_N^0(A)$, $G_r = \{ g \in G \ | \ g \equiv 1 \mod h^r \}$,
$$G^{(r)} = \{ X \in \GL_N(\kk[h]/h^r) \ | \ X \equiv 1 \mod h \} = 1 + h \Mat_N(\kk[h]/h^r) \subset
\Mat_N(\kk[h]/h^r)$$
Clearly $G_r \vartriangleleft G$ and the natural map $G \to G^{(r)}$
has for kernel $G_r$, hence $G/G_r$ is isomorphic to a subgroup of $G^{(r)}$.
This latter group is clearly nilpotent, as $(1+h^ux,1+h^vy) \equiv  1 + h^{uv} (xy-yx) \mod h^{uv+1}$
(where $(a,b) = aba^{-1}b^{-1}$),
and torsion-free as $(1+hx)^n \equiv 1 + n hx \mod h^2$ and $\kk$ has characteristic $0$. Thus all the $G/G_r$
are torsion-free nilpotent, and since clearly $\bigcap_r G_r = \{ 1 \}$ we get that $G$ is
residually torsion-free nilpotent.
\end{proof}

Usually, linear representations have their image
in such a group
when they appear
as the monodromy of a flat connection on a \emph{trivial} vector bundle (see \cite{KRAMINF}). However,
we show how to (partly conjecturally) use this approach in situation where this
geometric motivation is far less obvious. In particular, we prove the
following.

\begin{theorem} \label{theoA} If $B$ is an Artin group for which the Paris representation 
is faithful, then its pure subgroup $P$ is residually torsion-free nilpotent.
\end{theorem}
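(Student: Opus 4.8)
The plan is to embed the pure subgroup $P$ into a group of the form $\GL_N^0(\kk'[[h]])$, for a suitable field $\kk'$ of characteristic $0$, and then to invoke Lemma~\ref{lemresnil} together with the obvious fact that a subgroup of a residually torsion-free nilpotent group is residually torsion-free nilpotent (subgroups of torsion-free nilpotent groups are torsion-free nilpotent, and a homomorphism $\pi\colon G \onto Q$ onto a torsion-free nilpotent group with $\pi(h)\neq 1$ for $h\in H\leq G$ restricts to one on $H$). So the entire content is to construct such an embedding, using the faithfulness of the Paris representation as the sole input.

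Write $\rho\colon B \to \GL_N(R)$ for the Paris representation, where $R$ is the ring over which it is defined (a ring of Laurent polynomials in one or several parameters $q_i$ over $\Q$), and recall $P = \Ker(B\to W)$, so $B/P = W$ is the associated Coxeter group. First I would choose an injective ring homomorphism $\iota\colon R \into \kk'[[h]]$, with $\kk'$ a large enough characteristic $0$ extension of $\kk$, sending each $q_i$ to $e^{\la_i h} = 1 + \la_i h + \dots$ with the $\la_i\in\kk'$ algebraically independent over $\Q$ (if $R$ has a single parameter $q$ one keeps $\kk'=\kk$ and uses $q\mapsto e^h$, already injective). Applying $\iota$ entrywise yields an injection $\iota_*\colon \GL_N(R)\into\GL_N(\kk'[[h]])$, so the composite $\hat\rho = \iota_*\circ\rho$ is faithful as soon as $\rho$ is — which is precisely the hypothesis. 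It then remains to show $\hat\rho(P)\subseteq\GL_N^0(\kk'[[h]])$; equivalently, that the reduction $\hat\rho\bmod h\colon B\to\GL_N(\kk')$ kills $P$, i.e.\ factors through $W$.

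This last point is the heart of the matter and the only place the explicit shape of the Paris representation is used: modulo $h$ the substitution $\iota$ specializes all parameters to $1$, so one must check that, at this value of the parameters, each matrix $\rho(s_i)$ attached to a standard Artin generator is diagonalizable with eigenvalues in $\{1,-1\}$, hence an involution. Granting this, the defining relations of $W$ — the braid relations, already satisfied by the $\rho(s_i)$, together with $s_i^2=1$ — all hold modulo $h$, so $\hat\rho\bmod h$ factors through $W$ and kills $P$. This semisimple degeneration of the generators to the reflection action of $W$ is exactly the feature of the Lawrence--Krammer representation exploited in \cite{RESNIL,KRAMINF} to reprove the residual torsion-free nilpotence of the classical pure braid groups, and I expect that verifying it for the Paris representation — controlling $\rho(s_i)$ at the degenerate point, in particular ruling out Jordan blocks — will be the main obstacle; the rest is formal. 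Once $\hat\rho|_P\colon P\into\GL_N^0(\kk'[[h]])$ is in hand, Lemma~\ref{lemresnil} shows $\GL_N^0(\kk'[[h]])$ is residually torsion-free nilpotent, hence so is its subgroup $\hat\rho(P)\cong P$, which is the claim.
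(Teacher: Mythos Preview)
Your strategy is exactly the paper's: specialize the parameters to exponentials in $h$, check that modulo $h$ the representation factors through $W$, and conclude via Lemma~\ref{lemresnil}. Two points need correction or sharpening.

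First, a genuine gap: you write $\rho\colon B\to\GL_N(R)$ with a finite $N$ and plan to invoke Lemma~\ref{lemresnil} directly. But for an arbitrary Coxeter system the set $\Phi^+$ of positive roots is infinite, and the Paris representation acts on a free module with basis indexed by $\Phi^+$. So there is no finite $N$, and Lemma~\ref{lemresnil} as stated does not apply. The paper deals with this by redoing the filtration argument for $\{\varphi\in\GL(\tilde V)\mid \varphi\equiv\mathrm{Id}\bmod h\}$ with $\tilde V$ of arbitrary rank; the adaptation is immediate, but it must be made.

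Second, about the ``main obstacle'' you anticipate. The paper's substitution is $y\mapsto e^h$, $x\mapsto e^{\sqrt 2 h}$ into $\Q(\sqrt 2)[[h]]$, and the technical core is Proposition~\ref{propcruc}: under $y\mapsto e^h$ one has $T(s,\beta)\equiv 1$ if $\beta=\alpha_s$ and $T(s,\beta)\equiv 0$ otherwise, proved by a case-by-case induction on $dp(\beta)$ through the defining recursions (D1)--(D9). Granting this, the reduction $\overline{\psi_s}$ is not the reflection representation of $W$ as you suggest, but the \emph{permutation} action $e_\beta\mapsto e_{s\star\beta}$ of $W$ on $\Phi^+$. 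In particular there are no Jordan blocks to rule out: the reduced generators are permutation matrices for involutions, hence squares to the identity automatically, and $\overline{\Psi(g)}=\mathrm{Id}$ for $g\in P$ follows. So your worry about semisimplicity at the degenerate point dissolves once one sees the actual shape of the limit.
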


So far, this Paris representation, which is a generalization of the Krammer representation of \cite{KRAM}, has been shown to be faithful only for the case
where $W$ is a finite Coxeter group. By contrast, in the case of the pure braid groups of complex
reflexion groups, which are other natural generalization of pure braid groups, and for which a natural
and possibly faithful monodromy representation has been constructed in \cite{KRAMCRG},
we get the following more modest but unconditional result. 


\begin{theorem} \label{theoB} If $B$ is the braid group of a complex reflection group
of type $G_{25}$, $G_{26}$, $G_{32}$, $G_{31}$, then its pure braid
group $P$ is residually torsion-free nilpotent.
\end{theorem}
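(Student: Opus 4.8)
The plan is to realize, for each relevant complex reflection group $W\in\{G_{25},G_{26},G_{31},G_{32}\}$, the pure braid group $P=P(W)$ as a subgroup of a group of the shape $\GL_N^0(A)$ with $A=\C[[h]]$, and then to quote Lemma~\ref{lemresnil} together with the trivial remark that a subgroup of a residually torsion-free nilpotent group is again residually torsion-free nilpotent (just restrict the nilpotent quotient maps). Write $M$ for the complement of the reflection arrangement of $W$, so that $P=\pi_1(M)$ and $B=B(W)=\pi_1(M/W)$. The representation built in \cite{KRAMCRG} is the monodromy of a $W$-equivariant flat connection $\nabla=d-h\,\Omega$ on the trivial bundle $M\times\C^N$, where $\Omega$ is a matrix-valued logarithmic $1$-form with constant residues and $h$ is a formal variable; this produces a homomorphism $R\colon B\to\GL_N(A)$.

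I would first observe that $R(P)\subseteq\GL_N^0(A)=1+h\,\Mat_N(A)$: an element of $P$ is represented by a genuine loop $\gamma$ in $M$ based at the chosen point, and the monodromy of $\nabla$ along $\gamma$ equals $1+h\!\int_\gamma\!\Omega+h^2(\cdots)+\cdots$, which is $\equiv 1\bmod h$; by contrast a braid reflection generating $B$ is a path between two points of $M$ that differ by some $w\in W$, so its $R$-image is only $\equiv w\bmod h$ --- which is exactly why one cannot hope that $B$ itself embeds in $\GL_N^0(A)$, and indeed it does not (e.g.\ $B(G_{32})\cong B_5$ has perfect commutator subgroup, hence is not residually nilpotent). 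Granting this, $R$ restricts to $R|_P\colon P\to\GL_N^0(A)$, and by Lemma~\ref{lemresnil} the theorem will follow as soon as $R|_P$ is \emph{faithful}; moreover, since $[B:P]=|W|<\infty$ while $B$ is torsion-free, $R|_P$ is faithful if and only if $R$ is faithful on all of $B$. Proving this faithfulness is the heart of the matter; the rest is formal.

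For $W\in\{G_{25},G_{26},G_{32}\}$ I would argue as follows. These are Shephard groups, and their braid groups are, as abstract groups, the spherical Artin groups of types $A_3$, $B_3$ and $A_4$ respectively. One checks, by comparing the residues of $\Omega$ with the infinitesimal data of the Krammer representation (this comparison is made in \cite{KRAMCRG}), that under these isomorphisms $R$ becomes an exponential specialization of the Lawrence--Krammer (Paris) representation of the corresponding Artin group. That representation is faithful by the theorems of Bigelow, Krammer, Cohen--Wales and Digne, and the passage from the algebraic representation to its monodromy form loses no injectivity (the argument of \cite{KRAMINF}); hence $R$ is faithful, and $P(G_{25})$, $P(G_{26})$, $P(G_{32})$ embed into $\GL_N^0(A)$.

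The main obstacle is the case $W=G_{31}$, where $B(G_{31})$ is \emph{not} isomorphic to any finite-type Artin group, so the faithfulness of $R$ cannot be imported from the classical linearity results. The route I would try uses Springer theory: a comparison of degrees shows that $W(G_{31})$ is the centralizer in $W(E_8)$ of a $\zeta$-regular element of order $4$, and one expects this to lift to an embedding $B(G_{31})\into B(E_8)$ compatible with the projections to the reflection groups, so that $P(G_{31})=B(G_{31})\cap P(E_8)$; identifying $R$ with the restriction to $B(G_{31})$ of the Paris representation of $B(E_8)$ --- which is faithful because $E_8$ is a finite Coxeter group --- would then give the faithfulness of $R|_{P(G_{31})}$ for free, since $P(G_{31})\le P(E_8)$ and a subgroup of $P(E_8)\into\GL_N^0(A)$ is again embedded. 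I expect the real work to lie precisely here: the braid-group lift, the compatibility $P(G_{31})=B(G_{31})\cap P(E_8)$, and the matching of the two representations. Once faithfulness is secured in all four cases, Lemma~\ref{lemresnil} closes the argument uniformly.
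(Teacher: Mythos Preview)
Your overall plan---embed $P$ in some $\GL_N^0(A)$ and apply Lemma~\ref{lemresnil}---is the right one, and for $G_{31}$ you have essentially found the paper's argument: the Springer/Bessis embedding $B_{31}\hookrightarrow B_{37}=B(E_8)$ is compatible with the projections to the reflection groups (this lift is already established in \cite{BESSIS} and made explicit in \cite{DMM}), hence restricts to $P_{31}\hookrightarrow P_{37}$, and $P_{37}$ is residually torsion-free nilpotent by \cite{RESNIL,KRAMINF}. Note that your ``matching of the two representations'' is unnecessary here: once $P_{31}$ sits inside $P_{37}$ you are done, with no need to compare the \cite{KRAMCRG} monodromy to the Paris representation of $E_8$.

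For $G_{25}$ and $G_{32}$ the paper does something close to what you propose, but more concretely and without invoking \cite{KRAMCRG} at all: it takes the \emph{algebraic} Krammer representation of $\mathcal{B}_n$ ($n=4,5$), embeds the coefficient ring via $q\mapsto -\zeta_3 e^h$, $t\mapsto e^{\sqrt{2}h}$, and checks by hand that $\sigma_k^3\equiv 1\bmod h$. Since $G_{25}$ and $G_{32}$ are precisely the quotients of $\mathcal{B}_4,\mathcal{B}_5$ by $\sigma_k^3=1$, this puts $P(G_{25})$ and $P(G_{32})$ inside $\GL_N^0(\C[[h]])$, and faithfulness comes straight from Krammer's theorem (the embedding of coefficient rings is injective). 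Your route via the monodromy of \cite{KRAMCRG} and its comparison with Lawrence--Krammer is morally the same idea, but the paper's version avoids having to justify that comparison.

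The genuine gap is $G_{26}$. You treat it uniformly with $G_{25},G_{32}$ by asserting that the \cite{KRAMCRG} monodromy for $G_{26}$ is an exponential specialization of the Paris representation of the Artin group of type $B_3$, and hence faithful. This identification is not established; the paper itself calls the \cite{KRAMCRG} representation only ``possibly faithful''. Moreover, even granting an abstract isomorphism $B(G_{26})\cong B(B_3)$, the condition ``image of $P$ lies in $\GL_N^0$'' singles out different subgroups: the pure complex braid group $P(G_{26})=\ker(B\to G_{26})$ is \emph{not} the pure Artin group $\ker(B\to W(B_3))$, and Theorem~\ref{theoA} only handles the latter. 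Concretely, one would need a specialization of the Cohen--Wales/Paris parameters making one generator of order $2$ and the other two of order $3$ modulo $h$; you have not produced one, and the paper does not attempt this. Instead the paper proves (Proposition~\ref{prop2526}) that $P_{26}$ embeds in $P_{25}$: there is a commuting square
\[
\xymatrix{
B_{25} \ar@{->>}[d] & B_{26}\ar@{_{(}->}[l] \ar@{->>}[d]\\
W_{25} & \ar@{->>}[l]W_{26}
}
\]
with the horizontal maps given by $(s,t,u)\mapsto((tu)^3,s,t)$; injectivity on braid groups comes from the Long embedding of the type-$B_n$ Artin group into $\mathcal{B}_{n+1}$ via the Artin action on $F_n$. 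This reduces $G_{26}$ to the already settled case of $G_{25}$ without any appeal to faithfulness of the \cite{KRAMCRG} monodromy.
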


The pure braid groups involved in the latter statement are equivalently
described as the fundamental groups of complements of
remarkable configurations of hyperplanes : the groups $G_{25}$, $G_{26}$ are related to the symmetry group of
the so-called Hessian configuration of the nine inflection points of nonsingular cubic curves, while  $G_{32}$ acts by automorphisms on the configuration of
27 lines on a nonsingular cubic surface. These groups belong to the special case of so-called `Shephard groups', namely the symmetry groups
of regular complex polytopes. The group $G_{31}$, introduced
by H. Maschke in his first paper \cite{MASCH}, is not a Shephard group, has all its reflections of order $2$, and is connected
to the theory of hyperelliptic functions. It is the only `exceptional' reflection group in dimension $n \geq 3$ which cannot be generated by $n$ reflections.

\section{Artin groups and Paris representation}
\subsection{Preliminaries on Artin groups}

Let $S$ be a finite set. Recall that a \emph{Coxeter matrix} based on $S$ is a matrix $M = (m_{s,t})_{s,t \in S }$
indexed by elements of $S$ such that
\begin{itemize}
\item $m_{ss} = 1$ for all $s \in S$
\item $m_{st} = m_{ts} \in \{ 2,3 , \dots, \infty \}$ for all $s,t \in S$, $s \neq t$.
\end{itemize}
and that the Coxeter system associated to $M$ is the couple $(W,S)$, with $W$ the group
presented by $< S \ | \ \forall s \in S \ s^2 = 1, \forall s,t \in S \ \ (st)^{m_{st}} = 1 >$.
Let $\Sigma = \{ \sigma_s , s \in S \}$ be a set in natural bijection with $S$. The \emph{Artin system}
associated to $M$ is the pair $(B, \Sigma)$ where $B$ is the group presented by
$< \Sigma \ | \ \forall s,t \in S \ \ \underbrace{\sigma_s \sigma_t \sigma_s \dots}_{m_{s,t} \mbox{\ terms }} 
= \underbrace{\sigma_t \sigma_s \sigma_t \dots}_{m_{s,t} \mbox{\ terms }}>$, and called the Artin group associated to $M$. The Artin monoid $B^+$
is the monoid with the same presentation. According to \cite{PARIS}, the natural monoid morphism $\sigma_s \mapsto
\sigma_s$ , $B^+ \to B$, is an embedding. There is a natural morphism $B \onto W$ given by
$\sigma_s \mapsto s$, whose kernel is known as the pure Artin group $P$.

For the sequel we will need a slightly more specialized vocabulary, borrowed from \cite{PARIS}. A Coxeter matrix is said to be \emph{small}
if $m_{s,t} \in \{ 2, 3 \}$ for all $ s \neq t$, and it is called \emph{triangle-free} if there is no
triple $(s,t,r)$ in $S$ such that $m_{s,t}$, $m_{t,r}$ and $m_{r,s}$ are all greater than $2$.

\subsection{Paris representation}

To a Coxeter system as above is naturally associated a linear representation
of $W$, known as the reflection representation. We briefly recall
its construction. Let $\Pi = \{ \alpha_s ; s \in S \}$ denote
a set in natural bijection with $S$, called the set of simple roots.
Let $U$ denote the $\R$-vector space with basis $\Pi$, and
$< \ , \ > : U \times U \to \R$ the symmetric bilinear form defined by
$$
<\alpha_s,\alpha_t > = \left\lbrace \begin{array}{lcl} -2 \cos \left( \frac{\pi}{m_{st}} \right)
& \mbox{ if } & m_{st} < \infty \\
-2 & \mbox{otherwise} \end{array} \right.
$$
In particular $< \alpha_s,\alpha_s > = 2$. There is a faithful
representation $W \to \GL(U)$ defined by
$s(x) = x - < \alpha_s,x> \alpha_s$ for $x \in U$, $s \in S$, which
preserves the bilinear form $< \ , \ >$. Let $\Phi = \{ w \alpha_s ; s \in S, w \in W \}$
be the root system associated to $W$, $\Phi^+ = \{ \sum_{s \in S} \la_s \alpha_s \in \Phi ; \forall s \in S \ \ \la_s \geq 0 \}$,
and $\Phi^- = - \Phi^+$. We let $\ell$ denote the length function on $W$ (resp. $B^+$)
with respect to $S$ (resp. $\Sigma$). The \emph{depth} of $\beta \in \Phi^+$
is
$$
dp(\beta) = \min \{ m \in \N \ | \ \exists w \in W \ \ w. \beta \in \Phi^- \mbox{\ and\ } \ell(w) = m \}.
$$
We have (see \cite{PARIS} lemma 2.5) 
$$
dp(\beta) = \min \{ m \in \N \ | \ \exists w \in W , s \in S \ \ \beta = w^{-1}. \alpha_s \mbox{\ and\ } \ell(w)+1 = m \}.
$$
When $s \in S$ and $\beta \in \Phi^+ \setminus \{ \alpha_s \}$, we have
$$
dp(s.\beta) = \left\lbrace \begin{array}{lcl}
dp(\beta) - 1 & \mbox{ if } & \langle \alpha_s, \beta \rangle > 0 \\
dp(\beta) &  \mbox{ if } & \langle \alpha_s,\beta \rangle = 0 \\
dp(\beta) +1 & \mbox{ if } & \langle \alpha_s, \beta \rangle < 0 
\end{array}
\right.
$$

In \cite{PARIS}, polynomials $T(s,\beta) \in \Q[y]$ are defined for $s \in S$ and $\beta \in \Phi^+$.
They are constructed by induction on $dp(\beta)$, by the following formulas.
When $dp(\beta) = 1$, that is $\beta = \alpha_t$ for some $t \in S$, then
$$
\begin{array}{llcll}
\mathrm{(D1)} & T(s,\alpha_t) &=& y^2& \mbox{ if \ } t=s \\
\mathrm{(D2)}  & T(s,\alpha_t) &=& 0& \mbox{ if \ } t\neq s \\
\end{array}
$$
When $dp(\beta) \geq 2$, then there exists $t \in S$ such that
$dp(t. \beta) = dp(\beta) -1$, and we necessarily have $b = \langle \alpha_t,\betaÊ\rangle > 0$. In case $\langle \alpha_s, \beta \rangle > 0$,
we have
$$
\mathrm{(D3)}\ \  T(s,\beta) = y^{dp(\beta)}(y-1) ;
$$
in case $\langle \alpha_s,\beta \rangle = 0$, we have
$$
\begin{array}{llcll}
\mathrm{(D4)} & T(s,\beta) &=& yT(s,\beta - b \alpha_t) & \mbox{ if \ } \langle \alpha_s , \alpha_t \rangle = 0 \\
\mathrm{(D5)}& T(s,\beta) &=& (y-1) T(s,\beta- b \alpha_t) + y T(t,\beta- b \alpha_s - b \alpha_t) & \mbox{ if \ } \langle \alpha_s , \alpha_t \rangle = -1; \\
\end{array}
$$
and, in case $\langle \alpha_s, \beta \rangle = -a < 0$, we have

$$
\begin{array}{llcll}
\mathrm{(D6)} & T(s,\beta) &=& yT(s,\beta - b \alpha_t) & \mbox{ if \ } \langle \alpha_s , \alpha_t \rangle = 0 \\
\mathrm{(D7)}& T(s,\beta) &=& (y-1) T(s,\beta- b \alpha_t) + y T(t,\beta- (b-a) \alpha_s - b \alpha_t) & \mbox{ if \ } \langle \alpha_s , \alpha_t \rangle = -1 \mbox{ and } b > a \\
\mathrm{(D8)}& T(s,\beta) &=& T(t,\beta- b \alpha_t) + (y-1) T(s,\beta-b \alpha_t) & \mbox{ if \ } \langle \alpha_s , \alpha_t \rangle = -1  \mbox{ and } b = a \\
\mathrm{(D9)}& T(s,\beta) &=& y T(s,\beta- b \alpha_t) +  T(t,\beta- b \alpha_t)+y^{dp(\beta)-1}(1-y) & \mbox{ if \ } \langle \alpha_s , \alpha_t \rangle = -1   \mbox{ and } b < a. \\
\end{array}
$$

Now introduce $\mathcal{E} = \{ e_{\beta}; \beta \in \Phi^+ \}$
a set in natural bijection with $\Phi^+$, and let $V$ denote
the free $\Q[x,y,x^{-1},y^{-1}]$-module with basis $\mathcal{E}$.
For $s \in S$, one defines a linear map
$\varphi_s : V \to V$ by
$$
\begin{array}{lcll}
\varphi_s (e_{\beta}) &=& 0 & \mbox{ if \ } \beta = \alpha_s \\
& & e_{\beta}  & \mbox{ if \ }  \langle \alpha_s, \beta \rangle = 0 \\
 & & y e_{\beta - a \alpha_s }  & \mbox{ if \ } \langle \alpha_s , \beta \rangle = a > 0 \mbox{ and } \beta \neq \alpha_s \\
& & (1-y)e_{\beta} + e_{\beta+ a \alpha_s }  & \mbox{ if \ } \langle \alpha_s, \beta \rangle= -a < 0
\end{array}
$$
We have $\varphi_s \varphi_t = \varphi_t \varphi_s$ if $m_{s,t} = 2$, $\varphi_s \varphi_t \varphi_s = \varphi_t \varphi_s \varphi_t$
if $m_{s,t} = 3$. Now the Paris representation $\Psi : B \to \GL(V)$
is defined by $\Psi : \sigma_s \mapsto \psi_s$, with
$$
\psi_s(e_{\beta}) = \varphi_s(e_{\beta}) + x T(s,\beta) e_{\alpha_s }.
$$

\subsection{Reduction modulo $h$}
We embed $\Q[y]$ inside $\Q[[h]]$ under
$y \mapsto e^h$ and consider congruences $\equiv$ modulo $h$. 
Using the formulas of \cite{PARIS}, we deduce the
main technical step of our proof.

\begin{proposition} \label{propcruc}
Let $s \in S$ and $\beta \in \Phi^+$. Then $T(s,\beta) \equiv 1$ if $\beta = \alpha_s$
and $T(s,\beta) \equiv 0$ otherwise.
\end{proposition}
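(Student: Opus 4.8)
The plan is to separate the arithmetic of the substitution $y\mapsto e^h$ from the combinatorics of the recursion (D1)--(D9). Since $e^h-1\in h\,\Q[[h]]$, every $P\in\Q[y]$ satisfies $P(e^h)\equiv P(1)\pmod h$: write $P(y)=P(1)+(y-1)R(y)$ in $\Q[y]$ and substitute $y=e^h$. Hence it suffices to prove the purely polynomial statement
\[
T(s,\beta)(1)=1\ \ \mbox{if}\ \ \beta=\alpha_s,\qquad T(s,\beta)(1)=0\ \ \mbox{if}\ \ \beta\neq\alpha_s,
\]
and I would establish this by induction on $dp(\beta)$, feeding $y=1$ into the formulas (D1)--(D9). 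Recall that these formulas define $T(s,\beta)$ precisely by induction on $dp(\beta)$, so every $T$ occurring on a right-hand side has a root of strictly smaller depth in its second slot and is therefore covered by the inductive hypothesis.

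For $dp(\beta)=1$, i.e. $\beta=\alpha_t$, the statement is exactly (D1)--(D2): $T(s,\alpha_s)(1)=1^2=1$ and $T(s,\alpha_t)(1)=0$ for $t\neq s$. For $dp(\beta)\geq 2$ the root $\beta$ is not simple, so the target is merely $T(s,\beta)(1)=0$. Choose $t\in S$ with $dp(t.\beta)=dp(\beta)-1$ and put $b=\langle\alpha_t,\beta\rangle>0$, so $\beta-b\alpha_t=t.\beta\in\Phi^+$ has depth $dp(\beta)-1$. In (D3) the right-hand side is $y^{dp(\beta)}(y-1)$, which vanishes at $y=1$. In each of (D4)--(D9), every summand carrying a factor $(y-1)$ or $(1-y)$ vanishes at $y=1$, and what remains is a power of $y$ --- hence $1$ --- times one or two values $T(\,\cdot\,,\gamma)(1)$ with $dp(\gamma)<dp(\beta)$; by the inductive hypothesis each such value is $0$ unless $\gamma$ equals the simple root attached to the first slot of the corresponding $T$.

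So the only point genuinely requiring work --- and where I expect the sole real effort, of a purely mechanical kind, to lie --- is the verification, for each of the remaining six cases, that the roots $\gamma$ occurring in the surviving $T$-terms are never that simple root. In every case $\gamma$ is obtained from $\beta$ by a $W$-element: it is $t.\beta$ in (D4), (D6), (D8), (D9), and $s.t.\beta$ in (D5), (D7); so $\gamma=\alpha_s$ or $\gamma=\alpha_t$ would, upon applying the inverse $W$-element, force $\beta$ itself into the list $\{\,\alpha_s,\ \alpha_s+\alpha_t,\ -\alpha_t\,\}$, and each of these is impossible: $-\alpha_t\notin\Phi^+$; $\alpha_s$ has depth $1<dp(\beta)$; and $\alpha_s+\alpha_t$ would give $\langle\alpha_s,\beta\rangle=1$, contradicting the sign condition $\langle\alpha_s,\beta\rangle<0$ of the case (D9) in which it arises. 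For instance, in (D5) the surviving term is $T(t,\beta-b\alpha_s-b\alpha_t)(1)=T(t,s.t.\beta)(1)$, and $s.t.\beta=\alpha_t$ would give $\beta=t.s.\alpha_t=\alpha_s$, which is excluded. Once all six exclusions are recorded the inductive hypothesis annihilates every surviving term, whence $T(s,\beta)(1)=0$; the induction closes and Proposition \ref{propcruc} follows.
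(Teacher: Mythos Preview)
Your proof is correct and follows essentially the same route as the paper: both reduce modulo $h$ (equivalently, evaluate at $y=1$) and then induct on $dp(\beta)$, checking each of (D1)--(D9) and excluding the finitely many degenerate cases where a root on the right-hand side might be simple. Your presentation is somewhat more streamlined---you make the reduction to evaluation at $y=1$ explicit at the outset, and you organize the exclusions uniformly via the $W$-action (noting that the bad cases force $\beta\in\{\alpha_s,\alpha_s+\alpha_t,-\alpha_t\}$)---whereas the paper argues each exclusion separately via direct scalar-product computations; but the substance is the same.
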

\begin{proof}
The case $dp(\beta) = 1$ is a consequence of (D1),(D2), as $y \equiv 1 \mod h$.
We thus browse through the various cases when $dp(\beta) \geq 2$, and use induction
on the depth. As in the definition of the polynomials, let $t \in S$
such that $dp(\gamma) = dp(\beta) -1$ for $\gamma = t. \beta$, and recall
that necessarily $b = \langle \alpha_t, \beta \rangle > 0$. In case
$\langle \alpha_s,\beta \rangle > 0$ then (D3) implies $T(s,\beta) \equiv 0$.
If $\langle \alpha_s, \beta \rangle = 0$, we have several subcases. If
$\langle \alpha_s, \alpha_t \rangle = 0$, then (D4) implies
$T(s,\beta) \equiv T(s,\gamma)$ with $\gamma = \beta - b  \alpha_t = t. \beta$
hence $dp(\gamma) < dp(\beta)$ and $T(s,\gamma) \equiv 0$ by induction,
unless $\gamma = \alpha_s$, that is $\alpha_s = \beta - b \alpha_t$, hence taking
the scalar product by $\alpha_s$ we would get $2 = 0$, a contradiction.
Otherwise, we have $\langle \alpha_s , \alpha_t \rangle = -1$. In that case,
(D5) implies $T(s,\beta) \equiv T(t, \beta- b \alpha_s - b \alpha_t)$. Note
that $\beta - b \alpha_t  = \gamma = t. \beta$, $\langle \alpha_s, \gamma \rangle
 = 0 - b \langle \alpha_t, \alpha_s \rangle = b$ and $s.\gamma = \gamma - b \alpha_s$.
Thus $T(s,\beta) \equiv T(t,st.\beta)$. Now $\langle \alpha_s,\gamma \rangle = b > 0$ hence
$dp(s.\gamma) = dp(\gamma) -1 < dp(\beta)$, unless $\gamma = \alpha_s$ ; but the case
$\gamma = \alpha_s$ cannot occur here, as it would imply
$$
2 = \langle \alpha_s, \alpha_s \rangle = \langle \alpha_s,\gamma \rangle = 
\langle \alpha_s,t.\beta \rangle = 
\langle \alpha_s,\beta - b \alpha_t  \rangle = 
-b \langle \alpha_s,  \alpha_t  \rangle = b$$
hence $\alpha_s = \gamma = t.\beta = \beta - 2 \alpha_t$, whence
$-1 = \langle \alpha_s, \alpha_t \rangle = 
\langle \beta, \alpha_t \rangle -2 \langle \alpha_t,\alpha_t \rangle = b - 4 = -2$,
a contradiction.
 
Finally,
$T(t,st.\beta) \equiv 0$ by induction unless $\beta - b \alpha_s -b \alpha_t = \alpha_t$,
in which case scalar product by $\alpha_t$ leads to the contradiction $2=0$.

The last case is when $\langle \alpha_s, \beta \rangle = -a < 0$, which is
subdivided in 4 subcases. Either $\langle \alpha_s, \alpha_t \rangle = 0$,
and then (D6) implies $T(s,\beta) \equiv T(s,\beta - b \alpha_t) \equiv 0$,
as $\beta - b \alpha_t = \alpha_s$ cannot occur (scalar product with $\alpha_s$ yields $2 = -a < 0$).
Or $\langle \alpha_s, \alpha_t \rangle = -1$ and $b > a$, then (D7) implies
$T(s,\beta) \equiv T(t,\beta - (b-a) \alpha_s - b \alpha_t) \equiv 0$
unless $\alpha_t = \beta - (b-a) \alpha_s - b \alpha_t$, which cannot occur for
the same reason as before (take the scalar product with $\alpha_t$). Or,
$\langle \alpha_s , \alpha_t \rangle = -1$ and $b=a$, in
which case (D8) implies $T(s,\beta) \equiv T(t,\beta- b \alpha_t) \equiv 0$,
as $\alpha_t \neq \beta - b \alpha_t$ (take the scalar product with $\alpha_t$).
Finally, the last subcase is $\langle \alpha_s, \alpha_t \rangle = -1$ and $b < a$,
then (D9) implies $T(s, \beta) \equiv T(s,\beta- b \alpha_t) + T(t,\beta - b \alpha_t) \equiv 0$,
unless $\alpha_s = \beta - b \alpha_t$, which leads to the contradiction $2 = b-a < 0$
under $\langle \alpha_s, \cdot \rangle$, or $\alpha_t = \beta - b \alpha_t$,
which leads to the contradiction $2 = -b < 0$
under $\langle \alpha_t, \cdot \rangle$. 
\end{proof}

We now embed $\Q[x^{\pm 1}, y^{\pm 1}]$ into $\Q(\sqrt{2})[[h]]$ under $y \mapsto e^h$,
$x \mapsto e^{\sqrt{2} h}$ (any other irrational than $\sqrt{2}$ would also do),
and define 
$\tilde{V} = V \otimes_{\iota} \Q(\sqrt{2})[[h]]$ where $\iota$
is the chosen embedding; that is, $\tilde{V}$
is the free $\Q(\sqrt{2})[[h]]$-module with basis $\mathcal{E}$, and clearly
$V \subset \tilde{V}$. We similarly introduce the $\Q(\sqrt{2})$-vector space $V_0$ with basis $\mathcal{E}$.
One has $\GL(V) \subset \GL(\tilde{V})$, and a reduction morphism $\End(\tilde{V}) \to \End(V_0)$.
Composing both we get elements $\overline{\psi_s}, \overline{\varphi_s} \in \End(V_0)$ associated to the $\psi_s \in \GL(V)$,
$\varphi_s \in \End(V)$. Because $x \equiv 1 \mod h$ and because of proposition \ref{propcruc} one gets from the definition of $\psi_s$ that
$$
\begin{array}{lcll}
\overline{\psi}_s(e_{\beta}) &=& \overline{\varphi_s}(e_{\beta}) & \mbox{ if\ } \beta \neq \alpha_s \\
& =  & \overline{\varphi_s}(e_{\alpha_s}) + e_{\alpha_s} = e_{\alpha_s} & \mbox{ if\ } \beta = \alpha_s. \\
\end{array}
$$
We denote $(w,\beta) \mapsto w \star \beta$ the natural action of $W$ on $\Phi^+$,
that is $w \star \beta = \beta$ if $w. \beta \in \Phi^+$, $w \star \beta = - \beta \in \Phi^+$ if $w.\beta \in \Phi^-$. The previous
equalities imply
$$
\forall s \in S \ \ \forall \beta \in \Phi^+ \ \ \overline{\psi_s}(e_{\beta}) = e_{s \star \beta}
$$
From this we deduce the following.

\begin{proposition}
For all $g \in P$, $\overline{\Psi(g)} = \mathrm{Id}_{V_0}$.
\end{proposition}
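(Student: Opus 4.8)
The plan is to identify the reduction $\overline{\Psi}$ of the Paris representation modulo $h$ with the permutation representation of the Coxeter group $W$ on the finite set $\Phi^+$, pulled back along the canonical projection $\pi\colon B \onto W$, $\sigma_s \mapsto s$; since $P = \ker\pi$ by definition, the statement follows at once.

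Concretely I would proceed as follows. First, the display preceding the statement, namely $\overline{\psi_s}(e_\beta) = e_{s\star\beta}$ for all $s\in S$ and $\beta\in\Phi^+$, says exactly that $\overline{\psi_s} = \overline{\Psi(\sigma_s)}$ is the permutation matrix of the self-map $\beta \mapsto s\star\beta$ of $\Phi^+$. Second, I would record that $(w,\beta)\mapsto w\star\beta$ is a genuine action of $W$ on $\Phi^+$: the linear action of $W$ on $U$ permutes $\Phi$ and commutes with $\beta\mapsto-\beta$, and since $\Phi = \Phi^+\sqcup\Phi^-$ with $\Phi^- = -\Phi^+$ it descends to the set $\Phi/\{\pm1\}$, which we identify with $\Phi^+$; hence $w\mapsto(\text{permutation matrix of }\beta\mapsto w\star\beta)$ is a group homomorphism $\rho\colon W\to\GL(V_0)$. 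Third, reduction modulo $h$ is the ring morphism $\End(\tilde V)\to\End(V_0)$ already introduced, and it sends $\GL(\tilde V)$ into $\GL(V_0)$ (the determinant of an invertible power-series matrix is a unit, so its constant term is nonzero); composing with $\Psi\colon B\to\GL(V)\subset\GL(\tilde V)$ yields a group homomorphism $\overline{\Psi}\colon B\to\GL(V_0)$ with $\overline{\Psi}(g)=\overline{\Psi(g)}$. By the first two steps $\overline{\Psi}$ and $\rho\circ\pi$ agree on every generator $\sigma_s$, hence are equal. Therefore, for $g\in P=\ker\pi$ we get $\overline{\Psi(g)} = \rho(\pi(g)) = \rho(1) = \mathrm{Id}_{V_0}$.

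I do not anticipate a real obstacle: the whole argument rests on Proposition \ref{propcruc} and the explicit formulas for $\psi_s$ and $\varphi_s$, which already force the reduction $\overline{\psi_s}$ to be the permutation matrix of $\beta\mapsto s\star\beta$. The only points needing a word of care are that $\star$ is honestly a group action of $W$ (not merely a family of braiding involutions) and that reduction modulo $h$ really lands in $\GL(V_0)$. One may even avoid introducing $\rho$: since $e_{s\star(s\star\beta)} = e_\beta$, one has $\overline{\psi_s}^2 = \mathrm{Id}_{V_0}$, so $\overline{\Psi}$ annihilates the normal closure of $\{\sigma_s^2 : s\in S\}$ in $B$, which is precisely $P$, and the conclusion follows.
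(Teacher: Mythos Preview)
Your proof is correct and follows the same route as the paper: from $\overline{\psi_s}(e_\beta)=e_{s\star\beta}$ one deduces $\overline{\Psi(g)}(e_\beta)=e_{\pi(g)\star\beta}$ for all $g\in B$, hence $\overline{\Psi(g)}=\mathrm{Id}_{V_0}$ for $g\in P=\Ker\pi$. You simply spell out more carefully than the paper why $\star$ is a genuine $W$-action and why $\overline{\Psi}$ is a group homomorphism; one small remark is that your determinant justification is unnecessary (and inapplicable when $\Phi^+$ is infinite), since the fact that reduction modulo $h$ is a ring map $\End(\tilde V)\to\End(V_0)$ already forces $\overline{g}\cdot\overline{g^{-1}}=\overline{1}=\mathrm{Id}_{V_0}$.
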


\begin{proof}
Recall that $P$ is defined as $\Ker (\pi : B \onto W )$. From
$\overline{\psi_s}(e_{\beta}) = e_{s \star \beta}$
one gets $\overline{\Psi(g)}(e_{\beta}) = e_{\pi(g) \star \beta}$
for all $g \in B$
and the conclusion.
\end{proof}

As a consequence $\Psi(P) \subset \{ \varphi \in \GL(V) \ | \ \overline{\varphi} = \mathrm{Id}_{V_0} \}$.
The group
$\{ \varphi \in \GL(V) \ | \overline{\varphi} =  \mathrm{Id}_{V_0} \}$ is a subgroup of
$G = \{ \varphi \in \GL(\tilde{V}) \ | \overline{\varphi} =  \mathrm{Id}_{V_0} \}$, which we now prove to
be residually torsion-free nilpotent. We adapt the argument of lemma \ref{lemresnil} to the infinite-dimensional case. Let $\kk = \Q(\sqrt{2})$. The
canonical projection $\kk[[h]] \onto \kk[h]/h^r$ extends to a morphism $\pi_r : \End(\tilde{V}) \to
\End(V_0)\otimes_{\kk} \kk[h]/h^r$ with clearly $\pi_1(\varphi) = \overline{\varphi}$.
Let $G_r = \{ \varphi \in \GL(\tilde{V}) \ | \ \pi_r(\varphi) = \mathrm{Id} \}$.
Then $G/G_r$ is identified to $\pi_r(G)$ which is a subgroup of $\{ \mathrm{Id}_{V_0} + h u \ | \ u \in \End(V_0) \otimes_{\kk} \kk[h]/h^r \}$,
which is clearly torsion-free and nilpotent. Since $\bigcap_r G_r = \{ 1 \}$ this proves the residual torsion-free nilpotence of
$G$ and
theorem \ref{theoA}.

\section{Braid groups of complex reflection groups}

A pseudo-reflection in $\C^n$ is an endomorphism which fixes an hyperplane.
For $W$ a finite subgroup of $\GL_n(\C)$ generated by pseudo-reflections
(so-called complex reflection group), we have
a \emph{reflection
arrangement} $\mathcal{A} = \{ \Ker (s-1) | s \in \mathcal{R} \}$, where
$\mathcal{R}$ is the set of reflections of $W$. Letting $X$ denote the
hyperplane complement $X = \C^n \setminus \bigcup \mathcal{A}$, the fundamental
groups $P = \pi_1(X)$ and $B = \pi_1(X/W)$ are called the pure braid group and braid
group associated to $W$. The case of spherical type Artin groups corresponds to the
case where $W$ is a finite Coxeter group.

It is conjectured that $P$ is always residually torsion-free nilpotent. For
this one can assume that $W$ is irreducible. According to \cite{SHEPTODD}, such a $W$ belongs either to
an infinite series $G(de,e,n)$ depending on three integer parameters
$d,e,n$, or to a finite set of 34 exceptions, denoted $G_4,\dots,G_{37}$. The fiber-type argument of \cite{FALK,FALKDEUX}
to prove the residual torsion-free nilpotence only works
for the groups $G(d,1,n)$, and when $n = 2$.

For the case of $W$ a finite Coxeter group, we used the Krammer representation
to prove that $P$ is residually torsion-free nilpotent in \cite{RESNIL,KRAMINF}.
The exceptional groups of rank $n > 2$ which are not Coxeter groups are
the 9 groups $G_{24}$, $G_{25}$, $G_{26}$, $G_{27}$, $G_{29}$, $G_{31}$, $G_{32}$, $G_{33}$, $G_{34}$.

We show here that this argument can be adjusted to prove the
residual torsion-free nilpotence for a few of them.

We begin with the Shephard groups $G_{25}$, $G_{26}$, $G_{32}$.
The Coxeter-like diagrams of these groups are the following ones.

\def\nnode#1{{\kern -0.6pt\mathop\bigcirc\limits_{#1}\kern -1pt}}
\def\ncnode#1#2{{\kern -0.4pt\mathop\bigcirc\limits_{#2}\kern-8.6pt{\scriptstyle#1}\kern 2.3pt}}
\def\sbar#1pt{{\vrule width#1pt height3pt depth-2pt}}
\def\dbar#1pt{{\rlap{\vrule width#1pt height2pt depth-1pt} 
                 \vrule width#1pt height4pt depth-3pt}}

$$
G_{25}\ \ \  \ncnode3s\sbar16pt\ncnode3t\sbar16pt\ncnode3u\ \ \ \ \ \ \ \ \ \ \ \ 
G_{26}\ \ \  \ncnode2s\dbar16pt\ncnode3t\sbar16pt\ncnode3u \ \ \ \ \ \ \ \ \ \ \ \ 
G_{32}\ \ \  \ncnode3s\sbar10pt\ncnode3t\sbar10pt\ncnode3u\sbar10pt\ncnode3v
$$

It is known (see \cite{BMR}) that removing the conditions on the order
of the generators gives a (diagrammatic) presentation of the corresponding braid group.
In particular, these have for braid groups the Artin groups of
Coxeter type $A_3, B_3$ and $A_4$, respectively.

We recall a matrix expression of the Krammer representation
for $B$ of Coxeter type $A_{n-1}$, namely for the classical braid group on $n$
strands. Letting $\sigma_1,\dots,\sigma_{n-1}$ denote its Artin generators
with relations $\sigma_i \sigma_j = \sigma_j \sigma_i$ if $|j - i| \geq 2$,
$\sigma_i \sigma_{i+1} \sigma_i = \sigma_{i+1} \sigma_i \sigma_{i+1}$, their action
on a specific basis $x_{ij}$ ($1 \leq i < j \leq n$) is given by the following formulas (see \cite{KRAM})
$$
\left\lbrace \begin{array}{ll}
\sigma_k x_{k,k+1} = tq^2 x_{k,k+1} \\
\sigma_k x_{i,k} = (1-q)x_{i,k} + q x_{i,k+1} & i<k\\
\sigma_k x_{i,k+1} = x_{i,k} + t q^{k-i+1} (q-1) x_{k,k+1} & i<k \\
\sigma_k x_{k,j} = tq(q-1) x_{k,k+1} + q x_{k+1,j}& k+1<j \\
\sigma_k x_{k+1,j} = x_{k,j} + (1-q) x_{k+1,j} & k+1<j\\
\sigma_k x_{i,j} = x_{i,j} & i<j<k \mbox{ or } k+1<i<j \\
\sigma_k x_{i,j} = x_{i,j} + tq^{k-i} (q-1)^2  x_{k,k+1} & i<k<k+1<j\\
\end{array} \right.
$$
where $t$ and $q$ denote algebraically independent parameters. We
embed the field $\Q(q,t)$ of rational fractions in $q,t$ into $K = \C((h))$ by $q \mapsto -\zeta_3 e^h$ and
$t \mapsto e^{\sqrt{2}h}$, where $\zeta_3$ denotes a primitive 3-root of 1.
We then check by an easy calculation that $\sigma_k^3 \equiv 1$
modulo $h$. Since the quotients of the braid group on $n$
strands by the relations $\sigma_k^3 = 1$
are, for $n = 3,4,5$, the Shephard group of types $G_4, G_{25}$ and $G_{32}$,
respectively, it follows that the pure braid groups of these types
embed in 
$\GL_N^0(A)$ with
$N = n(n-1)/2$ and $\kk = \C$, and this proves their residual torsion-free nilpotence
by lemma \ref{lemresnil}.

We now turn to type $G_{26}$. Types $G_{25}$ and $G_{26}$
are symmetry groups of regular complex polytopes which are
known to be closely connected (for instance they both appear in the
study of the Hessian configuration, see e.g. \cite{COXETER} \S 12.4 and
\cite{ORLIKTERAO} example 6.30). The hyperplane arrangement
of type $G_{26}$ contains the 12 hyperplanes of type $G_{25}$
plus 9 additional ones. The natural inclusion induces morphisms
between the corresponding pure braid groups, which cannot
be injective, since a loop around one of the extra
hyperplanes is non trivial in type $G_{26}$.
However we will prove the
following, which proves the residual torsion-free nilpotence in type $G_{26}$.

\begin{proposition} \label{prop2526}The pure braid group of type $G_{26}$ embeds into
the pure braid group of type $G_{25}$.
\end{proposition}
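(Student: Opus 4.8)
The plan is to compare the two braid groups at the level of their defining diagrams and to realise the pure braid group of type $G_{26}$ as the kernel of a homomorphism onto a complex reflection group, sitting naturally inside a group already known (from the previous paragraph) to be residually torsion-free nilpotent. Recall that, by \cite{BMR}, the braid group of type $G_{25}$ is the Artin group of Coxeter type $A_3$ (on generators $\sigma_s,\sigma_t,\sigma_u$ with the braid relations of the $A_3$ diagram), and the braid group of type $G_{26}$ is the Artin group of Coxeter type $B_3$ (on generators, say, $\sigma_a,\sigma_s,\sigma_t$ where $m_{a,s}=4$, $m_{s,t}=3$, $m_{a,t}=2$). There is a classical surjection of Artin groups $B(B_3)\onto B(A_3)$; more usefully for us, $B(B_3)$ is isomorphic to the subgroup of $B(A_4)$ (the braid group on $5$ strands) fixed by the graph automorphism, and this is the incarnation I would exploit, since the Krammer representation and its reduction modulo $h$ were written down explicitly in type $A$.

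First I would make precise the covering-space picture underlying Proposition~\ref{prop2526}: the hyperplane arrangement of type $G_{26}$ is obtained from that of type $G_{25}$ by adding $9$ hyperplanes, but — crucially — these extra hyperplanes together with the original $12$ are permuted transitively in a controlled way, and the group $G_{26}$ contains $G_{25}$ as a subgroup of index $2$, so that the complement $X_{26}$ is a quotient (a finite covering) situation relating $X_{25}/G_{25}$ and $X_{26}/G_{26}$. Concretely one has $G_{25}\subset G_{26}$ with $[G_{26}:G_{25}]=2$, $X_{26}=X_{25}$ is false (the arrangements differ), so instead I would pass through the braid groups: the inclusion $B(A_3)\into B(B_3)$ dual to $B(B_3)\onto B(A_3)$, no — rather, I would use that $B_3$ folds onto $A_3$... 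The cleanest route: realise $P_{26}=\Ker(B(B_3)\onto G_{26})$ and $P_{25}=\Ker(B(A_3)\onto G_{25})$, exhibit an injective homomorphism $B(B_3)\into B(A_3)\times(\text{something small})$ or directly an injection $B(B_3)\into B(A_4)$ carrying $P_{26}$ into $P_{\mathrm{some type }A}$-like group, and then check that the image lands inside $\GL_N^0(A)$ under the Krammer-type representation of the target.

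The key steps, in order, are: (1) identify, via \cite{BMR}, the braid group of type $G_{26}$ with the Artin group of type $B_3$; (2) produce an explicit injective homomorphism $\iota:B(B_3)\into B(A_4)=B_5$ — for instance the well-known embedding identifying $B(B_3)$ with the centraliser of the flip in $B_5$, sending the generator of order-$4$ node to a product of two commuting band generators and the others to symmetric pairs — and check it is injective (this is standard, e.g.\ via the surjection onto the associated Weyl groups and the fact that the corresponding fixed subgroup of $W(A_4)$ is $W(B_3)$, combined with a length/normal-form argument in the braid monoids); (3) verify that $\iota$ carries the pure Artin group $P_{26}$ into the subgroup of $B_5$ that, under the Krammer representation with the substitution $q\mapsto-\zeta_3 e^h$, $t\mapsto e^{\sqrt2 h}$, maps into $\GL_N^0(\C((h)))$ — here I use the congruence $\sigma_k^3\equiv1\bmod h$ already established and the fact that $\iota(P_{26})$ maps into $G_{32}=B_5/\langle\sigma_k^3\rangle$'s pure subgroup, hence (by that same computation) into $1+h\Mat_N$; and (4) conclude by Lemma~\ref{lemresnil}. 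Actually the statement asks for an embedding into the pure braid group of type $G_{25}$ specifically, so in step (2)–(3) I would instead arrange the embedding to land in $B(A_3)$-side: fold $B(B_3)$ into $B(A_3)$... but folding goes the wrong way. So the honest approach is: show $P_{26}\into P_{\text{of type }G_{32}}$-type ambient group is not what is claimed; rather, use that $G_{25}\le G_{26}$ index $2$ reversed — $G_{26}\supset G_{25}$, and a basepoint argument shows $\pi_1$ of the $G_{25}$-complement retracts onto $\pi_1$ of the $G_{26}$-complement after removing the nine hyperplanes, giving a splitting $P_{26}\into P_{25}$ as the subgroup generated by meridians of the twelve common hyperplanes.

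I expect the main obstacle to be step (2)–(3): proving \emph{injectivity} of the comparison map $P_{26}\into P_{25}$. The geometric picture (adding $9$ hyperplanes, index-$2$ inclusion of reflection groups) makes it plausible that the twelve "old" meridians generate a copy of $P_{26}$, but one must rule out new relations among them coming from the nine extra hyperplanes — equivalently, show that the retraction $X_{25}\to X_{26}$ obtained by filling in the nine hyperplanes (after the finite quotient by the order-$2$ "diagram" symmetry relating $G_{25}$ and $G_{26}$) is a $\pi_1$-surjection with a section. The standard tool here is the Lefschetz-type fibration / Falk–Randell style fibration argument for the $G_{25}\subset G_{26}$ pair, or alternatively Bessis–Digne–Michel-type results on well-generated groups giving the needed semidirect-product decomposition; once that section is in hand the embedding is immediate and Lemma~\ref{lemresnil} (applied through the type-$A_3$ Krammer representation modulo $h$, already checked above for $G_{25}$) finishes the proof. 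I would therefore spend the bulk of the argument establishing the existence of this section, and treat the rest as routine.
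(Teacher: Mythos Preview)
Your proposal never settles on a working argument and misses the key construction. You correctly identify $B_{26}$ with the Artin group of Coxeter type $B_3$ and $B_{25}$ with that of type $A_3$, but you then dismiss the possibility of an embedding $B(B_3)\hookrightarrow B(A_3)$ (``folding goes the wrong way'') and retreat to a geometric section argument. That retreat is fatal: the natural map $P_{26}\to P_{25}$ induced by the inclusion of complements $X_{26}\subset X_{25}$ is \emph{not} injective --- a meridian around one of the nine extra hyperplanes is nontrivial in $P_{26}$ but becomes trivial in $P_{25}$ --- so no ``retraction / section / meridians of the twelve common hyperplanes'' argument along those lines can produce the required injection. Your alternative route through $B(B_3)\hookrightarrow B(A_4)$ would at best yield $P_{26}\hookrightarrow P_{32}$, which is not the statement of the proposition.

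The missing idea is that there \emph{is} an injective homomorphism $B(B_3)\hookrightarrow B(A_3)=\mathcal{B}_4$, just not the folding map. One uses the isomorphism $B(B_n)\cong \mathcal{B}_n\ltimes F_n$ (semidirect product for the Artin action, cf.\ Crisp--Paris) together with Long's embedding $\mathcal{B}_n\ltimes F_n\hookrightarrow\mathcal{B}_{n+1}$ determined by $\sigma_i\mapsto\sigma_i$ and $g_1\mapsto(\sigma_2\cdots\sigma_n)^n$; for $n=3$ this gives $B_{26}\hookrightarrow B_{25}$. One then checks that this embedding fits into a commuting square with a \emph{surjection} $W_{26}\twoheadrightarrow W_{25}$ (both horizontal maps are given on generators by the single formula $(s,t,u)\mapsto((tu)^3,s,t)$), so that $P_{26}=\Ker(B_{26}\to W_{26})$ is carried into $P_{25}=\Ker(B_{25}\to W_{25})$. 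The substantive work is proving injectivity of Long's embedding and the semidirect product claim $\mathcal{B}_n\cap\mathcal{F}_n=\{1\}$, which goes through Magnus' theorem on the kernel of $\mathcal{B}_{n+1}\to\Aut(F_n)$ combined with an abelianization/center argument; none of this appears in your sketch.
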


More precisely, letting $B_i,P_i,W_i$ denote the braid group, pure
braid group and pseudo-reflection group of type $G_i$, respectively, we
construct morphisms $B_{26} \into B_{25}$ and $W_{26} \onto W_{25}$
such that the following diagram commutes, where the vertical arrows
are the natural projections.
$$
\xymatrix{
B_{25} \ar@{->>}[d] & B_{26}\ar@{_{(}->}[l] \ar@{->>}[d]\\
W_{25} & \ar@{->>}[l]W_{26}
}
$$
Both horizontal morphisms are given by the formula
$(s,t,u) \mapsto ((tu)^3,s,t)$, where $s,t,u$ denote the generators
of the corresponding groups according to the above diagrams. The morphism between
the pseudo-reflection groups is surjective because it is a retraction
of an embedding $W_{25} \into W_{26}$
mapping $(s,t,u)$ to $(t,u,t^{sut^{-1}u})$. 
The kernel of this projection is the subgroup of order
2 in the center of $W_{26}$ (which has order 6).

We now consider the morphism between braid groups and prove
that it is injective. First recall that the braid group
of type $G_{26}$ can be identified with the Artin group of
type $B_3$. On the other hand, Artin groups of type $B_n$
are isomorphic to the semidirect product of the Artin group
of type $A_{n-1}$, that we denote $\mathcal{B}_n$
to avoid confusions,
with a free group $F_n$ on $n$ generators $g_1,\dots,g_n$,
where the action (so-called `Artin action') is given (on the left) by
$$\sigma_i :
\left\lbrace 
\begin{array}{lcll}
g_i &\mapsto &g_{i+1}\\
g_{i+1} &\mapsto & g_{i+1}^{-1} g_i g_{i+1} \\
g_j &\mapsto & g_j & \mbox{ if } j \not\in \{i,i+1 \}
\end{array} \right.  
$$
If $\tau, \sigma_1,\dots,\sigma_{n-1}$ are the standard generators of the Artin
group of type $B_n$, with $\tau \sigma_1 \tau \sigma_1 = \sigma_1 \tau \sigma_1 \tau$,
$\tau \sigma_i = \sigma_i \tau$ for $i > 1$, and usual braid relations
between the $\sigma_i$, then this isomorphism is given by
$\tau \mapsto g_1$,
$\sigma_i \mapsto \sigma_i$ (see \cite{CRISPPARIS} prop. 2.1 (2) for more details).
Finally, there exists an embedding of this semidirect product
into the Artin group $\mathcal{B}_{n+1}$ of type $A_{n}$ which satisfies $g_1 \mapsto
 (\sigma_2 \dots \sigma_{n})^n$, and $\sigma_i \mapsto \ \sigma_i$
($i \leq n-1$). By composing both, we get an embedding which makes
the square commute. This proves proposition \ref{prop2526}.

This embedding of type $B_n$ into type $A_n$,
different from the more standard
one $\tau \mapsto \sigma_1^2, \sigma_i \mapsto \sigma_{i+1}$,
has been considered in \cite{LONG}. The algebraic proof given
there being somewhat sketchy, we provide the details here. This
embedding comes from the following construction.

Consider the (faithful)
Artin action as a morphism $\mathcal{B}_{n+1} \to
\Aut(F_{n+1})$, and the free subgroup $F_{n} = \langle g_1,\dots,g_{n}\rangle$ of $F_{n+1}$.
The action of $\mathcal{B}_{n+1}$
preserves the product $g_1 g_2\dots g_{n+1}$, and there is a natural
retraction $F_{n+1} \onto F_{n}$ which sends $g_{n+1}$ to $(g_1 \dots g_{n})^{-1}$.
This induces a map $\Psi : \mathcal{B}_{n+1} \to \Aut(F_{n})$, whose kernel is the center of
$\mathcal{B}_{n+1}$
by a theorem of Magnus (see \cite{MAGNUS}).
We claim that its image contains the group $\mathrm{Inn}(F_n)$ of inner automorphisms
of $F_{n}$, which is naturally isomorphic to $F_{n}$.

Indeed, it is straightforward to check
that $b_1 = (\sigma_2 \dots \sigma_{n})^{n}$
is mapped to $\Ad(g_1) = x \mapsto g_1 x g_1^{-1}$. Defining
$b_{i+1} = \sigma_i b_i \sigma_i^{-1}$, we get that
$b_i$ is mapped to $\Ad(g_i)$. In particular the subgroup
$\mathcal{F}_{n} = \langle b_1,\dots,b_{n}\rangle $ of $\mathcal{B}_{n+1}$ is free and there is a natural
isomorphism $\varphi : b_i \mapsto g_i$ to $F_{n}$
characterized by the property $b.g = \Ad(\varphi(b))(g)$ for all $g \in F_n$
and $b \in \mathcal{F}_n$, that is $\Ad(\varphi(b)) = \Psi(b)$ for all $b \in \mathcal{F}_n$.

Now,
let $\mathcal{B}_n \subset \mathcal{B}_{n+1}$ be generated by
$\sigma_i, i \leq n-1$. Its action on $F_n$ is the usual Artin action
recalled above.
For $\sigma \in \mathcal{B}_n$ and $b \in \mathcal{F}_{n}$ we know
that $$
\forall x \in F_n \ \ \ \sigma b \sigma^{-1} . x = \sigma . \left( \varphi(b) (\sigma^{-1}.x) \varphi(b)^{-1}) \right)
=  (\sigma .  \varphi(b)) x (\sigma.\varphi(b))^{-1}
$$
that is
$\sigma b \sigma^{-1}$ is mapped to $\Ad(\sigma.\varphi(b))$
in $\Aut(F_{n})$, hence $\sigma b \sigma^{-1}$ and
$\varphi^{-1}(\sigma.\varphi(b)) \in \mathcal{F}_{n}$
have the same image under $\Psi$.
Since
the kernel of $\Psi$
is $Z(\mathcal{B}_{n+1})$, this proves that they
may differ only by
an element of the center $Z(\mathcal{B}_{n+1})$ of $\mathcal{B}_{n+1}$.
On the other hand, $\varphi : \mathcal{F}_n \to F_n$
commutes with the maps $F_n \to \Z$
and $\eta : \mathcal{F}_{n} \to \Z$ which map every generator to 1.
Likewise, the
Artin action commutes with $F_n \to \Z$ hence
$\eta(\varphi^{-1}(\sigma.\varphi(b))) = \eta(b)$.
We denote $\ell : \mathcal{B}_{n+1} \to \Z$ the abelianization map. We have
$\ell(b_i) = n(n-1)$ for all $i$, hence $\ell(b) = n(n-1)\eta(b)$
for all $b \in \mathcal{F}_{n}$. Since $\ell(b) =
\ell(\sigma b \sigma^{-1})$
it follows that $\sigma b \sigma^{-1}$ and
$\varphi^{-1}(\sigma.\varphi(b)) \in \mathcal{F}_{n}$ differ by an element
in $Z(\mathcal{B}_{n+1}) \cap (\mathcal{B}_{n+1},\mathcal{B}_{n+1})$,
where $(\mathcal{B}_{n+1},\mathcal{B}_{n+1})$ denotes the commutators subgroup.
But $Z(\mathcal{B}_{n+1})$
is generated by $(\sigma_1 \dots \sigma_{n})^{n+1}
\not\in (\mathcal{B}_{n+1},\mathcal{B}_{n+1})$
hence $\sigma b \sigma^{-1}=\varphi^{-1}(\sigma.\varphi(b)) \in \mathcal{F}_{n}$.

In particular $\mathcal{F}_{n}$ is stable under the action by
conjugation of $\mathcal{B}_{n}$, which
coincides with the Artin action. This is the embedding $\mathcal{B}_n \ltimes F_n \into
\mathcal{B}_{n+1}$ that is needed
to make the square commute. It remains to prove that we indeed have a semidirect product,
namely that $\mathcal{B}_n \cap \mathcal{F}_n = \{ 1 \}$.
First notice that $\mathcal{F}_n$ is mapped to $\mathrm{Inn}(F_n)$ and recall that the outer
Artin action $\mathcal{B}_n \to \mathrm{Out}(F_n)$ has for kernel $Z(\mathcal{B}_n)$,
hence $\mathcal{F}_n \cap \mathcal{B}_n \subset Z(\mathcal{B}_n)$. Then $x \in \mathcal{F}_n \cap \mathcal{B}_n$
can be written $x = z^k$ for some $k \in \Z$ with $z = (\sigma_1 \dots \sigma_{n-1})^n$. It is classical and easy to
check that the action of
$z$ on $F_n$ is given by $\Ad( (g_1 \dots g_n)^{-1})$, hence $\varphi(z^k) = \varphi((b_1 \dots b_n)^{- k})$
and $x = z^k =  (b_1 \dots b_n)^{- k}$.
Thus $\ell(x) = k n(n-1) = -k n^2(n-1)$ hence $k = 0$ and $x = 1$.

This concludes the case of $G_{26}$.
The case of $G_{31}$ is a consequence of the lifting of Springer's theory of `regular elements' for complex
reflection groups to their associated braid group.  By Springer theory (see \cite{SPRINGER}), 
$W_{31}$ appears as the centralizer of a regular element $c$ of order $4$ in $W_{37}$, which is the Coxeter group of type $E_8$, and, 
as a consequence of \cite[thm. 12.5 (iii)]{BESSIS},
$B_{31}$ can be identified with the centralizer of a lift
$\tilde{c} \in B_{37}$ of $c$, in such a way that the natural diagram
$$
\xymatrix{
B_{31}\ar@{^{(}->}[r]\ar@{->>}[d] &   B_{37}\ar@{->>}[d] \\
W_{31}\ar@{^{(}->}[r]  &   W_{37} \\
}$$
commutes. This embedding $B_{31} \into B_{37}$ is explicitly described in \cite{DMM}, to which we refer for more details.
By commutation of the above diagram it induces an embedding $P_{31} \into P_{37}$. Since $P_{37}$ is known to be residually torsion-free nilpotent by \cite{RESNIL,KRAMINF},
this concludes the proof of theorem \ref{theoB}.

\end{document}